\newtheorem{theorem}{Theorem}
\newtheorem{corollary}[theorem]{Corollary}
\newtheorem{lemma}{Lemma}
\newenvironment{proof}[1][Proof]{\noindent\textbf{#1.} }{\ \rule{0.5em}{0.5em}}
\newcommand{\be}{\begin{equation}}
\newcommand{\ee}{\end{equation}}
\newcommand{\beq}{\begin{eqnarray}}
\newcommand{\eeq}{\end{eqnarray}}
\newcommand{\nbeq}{\begin{eqnarray*}}
\newcommand{\neeq}{\end{eqnarray*}}
\newcommand{\D}{\displaystyle}
\newcommand{\alphab}{{\pmb \alpha}}
\begin{document}

%%% Title section
\begin{center}
{\Large\bf Exponential and Hypoexponential Distributions: Some Characterizations}
\end{center}
\begin{center}
{\sc George Yanev}\\
{\it University of Texas Rio Grande Valley\\
Edinburg, Texas, U.S.A.\\
and \\
 Institute of Mathematics and Informatics\\
Sofia, BULGARIA \\
}
e-mail: {\tt george.yanev@utrgv.edu}
\end{center}

\abstract{The (general) hypoexponential distribution is the distribution of a sum of independent exponential random variables. We consider the particular case when the involved exponential variables have distinct rate parameters. We prove that 
the following converse result is true. If~for some $n\ge 2$, $X_1, X_2,\,\ldots,\,X_n$ are independent copies of a random variable $X$ with unknown distribution $F$ and a specific linear combination of $X_j$'s has hypoexponential distribution, then $F$ is exponential. Thus, we obtain new characterizations of the exponential distribution. As corollaries of the main results, we extend some previous characterizations established recently by Arnold and Villase\~{n}or (2013) for a particular convolution of two random variables. 
}

\section{Introduction and Main~Results} %Please carefully check the references citations in main text! They must be in numerical order starting with 1 !!!
Sums of exponentially distributed random variables play a central role in many stochastic models of real-world phenomena.  {{\it Hypoexponential distribution} is the convolution of $k$ exponential distributions each with their own rate $\lambda _{i}$, the~rate of the $i^{th}$ exponential distribution. As~an example, consider the distribution of the time to absorption of a finite state Markov process. If~we have a $k+1$ state process, where the first $k$ states are transient and the state $k+1$ is an absorbing state, then the time from the start of the process until the absorbing state is reached is {\it phase-type distributed}. This becomes the hypoexponential if we start in state $1$ and move skip-free from state $i$ to $i+1$ with rate $ \lambda _{i}$ until state $k$ transitions with rate $ \lambda _{k}$ to the absorbing state $k+1$.}

We write $Z_i\sim {\rm Exp}(\lambda_i)$ for $\lambda_i>0$, if~$Z_i$ has density
\[
f_i(z)=\lambda_i {\rm e}^{-\lambda_i z}, \quad z\ge 0 \quad \mbox{\it (exponential distribution)}.
\]

The distribution of the sum $ S_n:=Z_1+Z_2+\ldots +Z_n$,
where $\lambda_i$ for $i=1,\,\ldots,\,n$ are not all identical, is called (general) {\it hypoexponential distribution} (see~\cite{LL19,SKK16}). It is absolutely continuous and we denote by $g_n$ its density. {{It is called the {\it hypoexponetial} distribution as it has a coefficient of variation less than one, compared to the {\it hyper-exponential} distribution which has coefficient of variation greater than one and the {\it exponential} distribution which has coefficient of variation of one.}
In this paper, we deal with a particular case of the hypoexponential distribution when all $\lambda_i$ are distinct, i.e.,~$\lambda _i\ne \lambda _j$ when $i\ne j$. In~this case, it is known (\cite{R19}, p. 311;~\cite{F71}, Chapter 1, Problem 12)) that 
\be \label{ghexp}
S_n=
Z_1+Z_2+\ldots +Z_n
\quad \mbox{has density} \quad 
g_n(z):=\sum_{j=1}^n \ \ell_j f_j(z),
\quad z\ge 0.
\ee
Here the weight $\ell_j$ is defined as
\[
\ell_j=\prod_{i=1, i\ne j}^n \frac{\lambda_i}{\lambda_i-\lambda_j}.
\]

Please note that $\ell_j:=\ell_j(0)$, where
$\ell _{1}(x),\,\dots,\,\ell _{n}(x)$ are identified (see~\cite{SB99}) as the Lagrange basis polynomials associated with the points 
$\lambda_{1},\,\dots,\,\lambda_{n}$.
The convolution density $g_n$ in (\ref{ghexp}) is the weighted average of the values of the densities of $Z_1, Z_2,\,\ldots,\,Z_n$, where the weights $\ell_j$ sum to 1 (see~\cite{SB99}). Notice,~however,~since the weights can be both positive or negative, $g_n$ is not a ``usual'' mixture of densities. If~we place $\lambda_j$'s in  increasing or decreasing order, then the corresponding coefficients $\ell_j$'s alternate in~sign. 

Consider the Laplace transforms $\varphi_i(t):={\mathsf E}[{\rm e}^{-tZ_i}]$, $t\ge 0$, $i=1,2,\,\ldots,\,n$. They are well-defined and will play a key role in the proofs of the main~results.  

To begin with, let us look at the case when all $Z_i$'s are identically distributed, i.e.,~$\lambda_i=\lambda$ for \mbox{$i=1,2,\,\ldots,\,n$}, so we can use $\varphi$ for the common Laplace transform. The~sum \mbox{$S_n=Z_1+Z_2+\ldots +Z_n$} has Erlang distribution whose Laplace transform $\tilde{\varphi}$, because~of the independence, is expressed as~follows:
\[
\tilde{\varphi}(t)=\mathsf{E}\left[ {\rm e}^{-tS_n}\right]=\varphi^n(t) = \left(\frac{\lambda}{\lambda+t}\right)^n.
     \]
     
If we go in the opposite direction, assuming that $S_n$ has Erlang distribution with Laplace transform $\tilde{\varphi}$, then we conclude that 
 $\varphi_i(t)= \lambda(\lambda +t)^{-1}$ for each $i=1,2,\,\ldots,\,n$, which in turn implies that $Z_i\sim {\rm Exp}(\lambda)$. By~words, if~$Z_i$ are independent and identically distributed random variables and their sum has Erlang distribution, then the common distribution is~exponential.
 
{\it Does a similar characterization hold when the rate parameters $\lambda_i$ are all different?} The answer to this question is not obvious. It is our goal in this paper to show that the answer is~positive.

 Let $\mu_1, \mu_2,\,\ldots,\,\mu_n$ be positive real numbers, such that $\lambda_i=\lambda/\mu_i$. Without~loss of generality suppose that $\mu_1>\mu_2>\ldots >\mu_n>0$. Assume that $X_1, X_2,\,\ldots,\,X_n$, for~ fixed $n\ge 2$, are independent and identically distributed as a random variable $X$ with density $f$, $f(x)=\lambda{\rm e}^{-\lambda x}$, $x>0$. 
Then (\ref{ghexp}) is equivalent to the following:
\be \label{eqn_4}
S_n:=\mu_1X_1+\mu_2X_2+\cdots+\mu_nX_{n} \quad \mbox{has density}\quad g_n(x)= \sum_{j=1}^n \frac{\ell_j}{\mu_j}f\left(\frac{x}{\mu_j}\right),\quad x\ge 0.
\ee
Here the coefficients/weights are given as follows: 
\be \label{Lagrange_mu}
\ell_j =\prod_{i=1, i\ne j}^n \frac{\mu^{-1}_i}{\mu^{-1}_i-\mu^{-1}_j}=  \prod_{i=1, i\ne j}^n \frac{\mu_j}{\mu_j-\mu_i}, \quad j=1,2,\,\ldots,\,n.
\ee

We use now the common Laplace transform $\varphi(t):=E[{\rm e}^{-tX_i}]$. Please note that since $\mu_i\ne \mu_j$ for $i\ne j$, relation (\ref{eqn_4}) implies that
\beq \label{wphi}
\hspace{-0.5cm}\varphi(\mu_1 t)\varphi(\mu_2 t)\cdots \varphi (\mu_n t) & = &
\int_0^\infty {\rm e}^{-tx}g_n(x)\, dx\\
    & = & \int_0^\infty {\rm e}^{-tx}\sum_{j=1}^n \frac{\ell_j}{\mu_j}f\left(\frac{x}{\mu_j}\right)\, dx  \nonumber \\
    & = & \sum_{j=1}^n \ell_j\int_0^\infty {\rm e}^{-tx}\frac{1}{\mu_j}f\left(\frac{x}{\mu_j}\right)\, dx  =  \sum_{j=1}^n \ell_j\varphi(\mu_j t). \nonumber
    \eeq
    
The idea now is to start with an arbitrary non-negative random variable $X$ with unknown density $f$ and Laplace transform $\varphi$. If~the Laplace transform of the linear combination $S_n=\sum_{i=1}^n\mu_iX_i$
satisfies (\ref{wphi}), we will derive that  $\varphi (t)= \lambda(\lambda + t)^{-1}$. Thus, the common distribution of $X_j$, $j=1,2,\,\ldots,\,n$ is exponential. More precisely, the~following characterization result~holds.

\begin{theorem} Suppose that $ X_1, X_2,\,\ldots,\,X_n$, $n\ge 2$, are independent copies of a non-negative random variable $X$ with density $f$. Assume further that $X$ satisfies Cram\'{e}r's condition: there is a number $t_0>0$ such that ${\mathsf E}[{\rm e}^{-tX}]<\infty$ for all $t\in (-t_0,t_0)$. If~relation (\ref{eqn_4}) is satisfied for fixed $n\ge 2$ {and fixed positive mutually different numbers $\mu_1, \mu_2,\,\ldots,\,\mu_n$},
then $X\sim {\rm Exp}(\lambda)$ for some $\lambda>0$.
\end{theorem}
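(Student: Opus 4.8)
The plan is to reduce the theorem to the functional equation (\ref{wphi}), namely
\[
\varphi(\mu_1 t)\varphi(\mu_2 t)\cdots\varphi(\mu_n t) = \sum_{j=1}^n \ell_j\,\varphi(\mu_j t),
\]
and then to extract the common Laplace transform $\varphi$ from it coefficient by coefficient. Cram\'er's condition guarantees that $\varphi$ is analytic in the disk $|t|<t_0$, so all moments of $X$ are finite and $\varphi$ admits a convergent expansion $\varphi(t)=\sum_{k\ge0}c_k t^k$ with $c_0=1$ and $c_1=-\,{\mathsf E}[X]<0$ (strict since $X$ has a density). Because ${\rm Exp}(\lambda)$ has Laplace transform $\lambda(\lambda+t)^{-1}=\sum_{k\ge0}(-1)^k\lambda^{-k}t^k$, proving the theorem amounts to showing that the functional equation forces $c_k=c_1^{\,k}$ for every $k$; one then reads off $\lambda=-1/c_1=1/{\mathsf E}[X]>0$.

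First I would simplify the right-hand side. Writing $\ell_j=\mu_j^{\,n-1}\big/\prod_{i\ne j}(\mu_j-\mu_i)$ from (\ref{Lagrange_mu}) and inserting the series for $\varphi$, the coefficient of $t^k$ on the right is $c_k\sum_{j=1}^n \ell_j\mu_j^{\,k}$. The key algebraic fact I would invoke is the classical symmetric-function identity
\[
\sum_{j=1}^n \frac{\mu_j^{\,m}}{\prod_{i\ne j}(\mu_j-\mu_i)} = h_{m-n+1}(\mu_1,\dots,\mu_n),
\]
where $h_d$ is the complete homogeneous symmetric polynomial of degree $d$ (with $h_d=0$ for $d<0$). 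Taking $m=n-1+k$ gives $\sum_j \ell_j\mu_j^{\,k}=h_k(\mu_1,\dots,\mu_n)$, so the right-hand side of (\ref{wphi}) collapses to $\sum_{k\ge0}c_k\,h_k\,t^k$; this also reconfirms $\sum_j\ell_j=h_0=1$.

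Next I would expand the left-hand side as the $n$-fold product of power series, whose coefficient of $t^N$ is $\sum_{k_1+\cdots+k_n=N}\prod_{j=1}^n c_{k_j}\mu_j^{\,k_j}$, recalling also that $h_N(\mu)=\sum_{k_1+\cdots+k_n=N}\prod_j\mu_j^{\,k_j}$. Matching the coefficient of $t^N$ and separating the $n$ terms in which a single $k_j$ equals $N$ (which contribute $c_N\,p_N$, where $p_N=\sum_j\mu_j^{\,N}$) from the rest, I would argue by induction on $N$: assuming $c_k=c_1^{\,k}$ for all $k<N$, every remaining term has all its indices $<N$, so the induction hypothesis collapses the leftover sum to $c_1^{\,N}(h_N-p_N)$, yielding $c_N(h_N-p_N)=c_1^{\,N}(h_N-p_N)$. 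The crux is the non-vanishing of $h_N-p_N$: since $h_N$ sums all degree-$N$ monomials in the positive numbers $\mu_j$ while $p_N$ keeps only the pure powers, and for $n\ge2$, $N\ge2$ there is at least one genuinely mixed monomial, one has $h_N-p_N>0$. Hence $c_N=c_1^{\,N}$, closing the induction (the cases $N=0,1$ being the free normalization $c_0=1$ and the definition of $c_1$).

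Putting this together, $\varphi(t)=\sum_{k\ge0}c_1^{\,k}t^k=(1-c_1 t)^{-1}=\lambda(\lambda+t)^{-1}$ with $\lambda=-1/c_1>0$, which is exactly the Laplace transform of ${\rm Exp}(\lambda)$; uniqueness of Laplace transforms then gives $X\sim{\rm Exp}(\lambda)$. I expect the main obstacle to be establishing the symmetric-function identity cleanly (equivalently, recognizing that the Lagrange weights turn the right-hand side into the generating function $\prod_j(1-\mu_j t)^{-1}=\sum_k h_k t^k$), together with the positivity argument $h_N>p_N$ that supplies the non-degenerate factor $h_N-p_N$ driving the induction; everything else is bookkeeping with power series, justified throughout by Cram\'er's condition.
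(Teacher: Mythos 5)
Your proposal is correct, but it takes a genuinely different route from the paper's. Both arguments reduce the hypothesis to the Laplace-transform equation $\prod_i\varphi(\mu_i t)=\sum_j\ell_j\varphi(\mu_j t)$ and then match power-series coefficients by induction; the paper, however, first divides through by the product and works with the reciprocal $\psi=1/\varphi=\sum_k a_kt^k$, so that the target series $\psi(t)=1+\lambda^{-1}t$ terminates and the goal becomes $a_k=0$ for $k\ge 2$. That choice forces a Leibniz-rule expansion of $\prod_{i\ne j}\psi(\mu_i t)$ and a three-way partition of multi-indices ($\Lambda'_{k,j}$, $\Lambda''_{k,j}$, $\Lambda'''_{k,j}$), with Lemma 2(ii) invoked solely to annihilate the middle block. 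You instead keep $\varphi=\sum_k c_kt^k$ and show its coefficients are geometric, $c_k=c_1^k$. The algebraic engine is the same in both proofs: your identity $\sum_j\ell_j\mu_j^k=h_k(\mu_1,\dots,\mu_n)$ is exactly the first equality in the paper's chain (\ref{skk}) (Proposition 5 of the cited Smaili--Kadri--Kadry paper, rewritten in the language of complete homogeneous symmetric polynomials), and your positivity $h_N-p_N>0$ for $n,N\ge 2$ is exactly the strict inequality of Lemma 2(iii) that makes the paper's constants $c_k$ nonzero for $k\ge 2$. What your version buys is a cleaner induction: the dichotomy ``either one index equals $N$ or all indices are $<N$'' replaces the three-set partition, and Lemma 2(ii) is not needed at all. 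What it costs is carrying the full generating function $\prod_j(1-\mu_jt)^{-1}=\sum_kh_kt^k$ instead of a polynomial. The one step you flag as a possible obstacle, $\sum_j\mu_j^m/\prod_{i\ne j}(\mu_j-\mu_i)=h_{m-n+1}$, comes for free from the partial-fraction decomposition $\prod_i(1-\mu_it)^{-1}=\sum_j\ell_j(1-\mu_jt)^{-1}$ --- the same decomposition that underlies the hypoexponential density formula (\ref{ghexp}) itself --- so no genuine gap remains.
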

The studies of characterization properties of {exponential distributions} are abundant. Comprehensive surveys can be found in~\cite{A17,AH95,AV86,N06}. More recently, Arnold and Villase\~{n}or~\cite{AV13} obtained a series of exponential characterizations 
involving sums of two random variables and conjectured possible extensions for sums of more than two variables (see also~\cite{Y20}). Corollary~1 below extends the characterizations in~\cite{AV13,Y20} to sums of $n$ variables, for~any fixed $n\ge 2$.

Consider the special case of (\ref{eqn_4}) when $\mu_j=1/j$ for $j=1,2,\,\ldots,\,n$. Under~this choice of $\mu_j$'s, the~formula for the weight $\ell_j$ simplifies to (see~\cite{F71}, Chapter 1, Problem 13)
\[
\ell_j  =  \prod_{i=1,  i\ne j}^n \frac{i}{i-j}
   = {n \choose j}(-1)^{j-1}.
\]
Therefore, Theorem~1 reduces to the following~corollary.

\begin{corollary}
Suppose that $X_1, X_2,\,\ldots,\,X_n$, $n\ge 2$, are independent copies of a non-negative random variable $X$ with density $f$. Assume further that $X$ satisfies Cram\'{e}r's condition: there is a number $t_0>0$ such that ${\mathsf E}[{\rm e}^{-tX}]<\infty$ for all $t\in (-t_0,t_0)$. If~for fixed $n\ge 2$, 
\be \label{main_thm}
X_1+\frac{1}{2}X_2+\ldots+\frac{1}{n}X_n \quad \mbox{has density}\quad  \sum_{j=1}^n {n \choose j} (-1)^{j-1}jf(jx), \quad x\ge 0,
\ee
then $X\sim {\rm Exp}(\lambda)$ for some $\lambda>0$.
\end{corollary}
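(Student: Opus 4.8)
The plan is to recognize Corollary~1 as nothing more than the specialization of Theorem~1 to the particular choice $\mu_j = 1/j$ for $j = 1, 2, \ldots, n$. Consequently almost all the work reduces to checking that the hypotheses of Theorem~1 are met under this choice and that the two density formulas coincide; no new analytic machinery is required.

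First I would verify the admissibility of $\mu_j = 1/j$. The numbers $\mu_1 = 1 > \mu_2 = 1/2 > \cdots > \mu_n = 1/n > 0$ are positive, mutually distinct, and ordered exactly as required by the setup preceding Theorem~1. With this choice the linear combination $\mu_1 X_1 + \cdots + \mu_n X_n$ appearing in (\ref{eqn_4}) is precisely $X_1 + \frac{1}{2}X_2 + \cdots + \frac{1}{n}X_n$, matching the left-hand side of (\ref{main_thm}). The Cram\'er condition imposed on $X$ carries over verbatim.

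The one genuine computation is the simplification of the Lagrange weights (\ref{Lagrange_mu}). Substituting $\mu_j = 1/j$ gives $\mu_j/(\mu_j - \mu_i) = i/(i-j)$, so $\ell_j = \prod_{i \neq j} i/(i-j)$. I would evaluate this by splitting the product at $i = j$: the numerator contributes $\prod_{i\neq j} i = n!/j$, the factors with $i < j$ contribute $(-1)^{j-1}(j-1)!$, and those with $i > j$ contribute $(n-j)!$, so that $\ell_j = \binom{n}{j}(-1)^{j-1}$ after using $1/(-1)^{j-1} = (-1)^{j-1}$. Since $1/\mu_j = j$, the convolution density $g_n(x) = \sum_{j=1}^n (\ell_j/\mu_j)\, f(x/\mu_j)$ becomes $\sum_{j=1}^n j\binom{n}{j}(-1)^{j-1} f(jx)$, which is exactly the right-hand side of (\ref{main_thm}).

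Having matched both the random variable and its density with the $\mu_j = 1/j$ instance of (\ref{eqn_4}), Theorem~1 applies directly and forces $X \sim {\rm Exp}(\lambda)$ for some $\lambda > 0$. I do not expect a real obstacle here, as the corollary contains no content beyond Theorem~1; the only thing that could go wrong is a sign or factorial error in the weight evaluation, which is why I would carry out the split-product computation with care to confirm the clean form $\binom{n}{j}(-1)^{j-1}$.
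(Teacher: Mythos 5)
Your proposal is correct and follows exactly the route the paper takes: Corollary~1 is obtained by specializing Theorem~1 to $\mu_j=1/j$ and simplifying the Lagrange weights to $\ell_j=\binom{n}{j}(-1)^{j-1}$ (the paper cites Feller for this identity, whereas you carry out the split-product computation explicitly, and your evaluation is accurate). Nothing is missing.
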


 The exponential distribution has the striking property that if $\lambda=1$ {\it ({unit} exponential)}, then the density $f$ equals the survival function {(the tail of the cumulative distribution function)} $\overline{F}=1-F$. Therefore, in~case of {unit} exponential distribution, (\ref{eqn_4}) can be written as follows:
\be \label{eqn_44}
\tilde{S}_n:=\mu_1 X_1+\mu_2 X_2+\cdots+\mu_n X_{n}  \mbox{has density} \tilde{g}_n(x):= \sum_{j=1}^n \frac{\ell_j}{\mu_j}
\overline{F}\left(\frac{x}{\mu_j}\right), x\ge 0.
\ee 

We will show that (\ref{eqn_44}) is a sufficient condition for $X_1, X_2,\,\ldots,\,X_n$ to be {\it {unit} exponential}. 

\begin{theorem} Suppose that $X_1, X_2,\,\ldots,\,X_n$, $n\ge 2$, are independent copies of a non-negative random variable $X$ with distribution function $F$.  Assume also that $X$ satisfies Cram\'{e}r's condition: there is a number $t_0>0$ such that ${\mathsf E}[{\rm e}^{-tX}]<\infty$ for all $t\in (-t_0,t_0)$. If~relation (\ref{eqn_44}) is satisfied for fixed $n\ge 2$, 
then $X\sim {\rm Exp}(1)$.
\end{theorem}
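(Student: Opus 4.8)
The plan is to convert the distributional identity (\ref{eqn_44}) into a functional equation for the common Laplace transform $\varphi(t)={\mathsf E}[{\rm e}^{-tX}]$, and then to read off the moments of $X$ one at a time. First I would take the Laplace transform of both sides of (\ref{eqn_44}). By independence the left-hand side produces $\prod_{j=1}^n\varphi(\mu_j t)$, exactly as in (\ref{wphi}). For the right-hand side I would use the elementary identity $\int_0^\infty {\rm e}^{-sx}\overline{F}(x)\,dx=(1-\varphi(s))/s$ (integration by parts, valid for $s$ near $0$ by Cram\'er's condition), together with the substitution $u=x/\mu_j$, to obtain
\[
\int_0^\infty {\rm e}^{-tx}\,\frac{1}{\mu_j}\,\overline{F}\!\left(\frac{x}{\mu_j}\right)dx=\frac{1-\varphi(\mu_j t)}{\mu_j t}.
\]
This yields the governing functional equation
\[
\prod_{j=1}^n\varphi(\mu_j t)=\sum_{j=1}^n \ell_j\,\frac{1-\varphi(\mu_j t)}{\mu_j t},
\]
valid for all real $t$ in a neighborhood of $0$.

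Next I would exploit Cram\'er's condition, which makes $\varphi$ real-analytic near $0$, so that $\varphi(t)=\sum_{k\ge 0}c_k t^k$ with $c_0=1$ and $c_k=(-1)^k{\mathsf E}[X^k]/k!$, and the functional equation becomes an identity between convergent power series. The combinatorial ingredient I need is the weight identity $\sum_{j=1}^n \ell_j\,\mu_j^{\,m}=h_m(\mu_1,\dots,\mu_n)$, where $h_m$ denotes the complete homogeneous symmetric polynomial of degree $m$; this follows by expanding the partial-fraction identity $\prod_{j}(1+\mu_j t)^{-1}=\sum_{j}\ell_j(1+\mu_j t)^{-1}$ in powers of $t$ and matching coefficients. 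Since all $\mu_j>0$, each $h_m(\mu_1,\dots,\mu_n)$ is strictly positive.

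The decisive observation is structural. Writing $(1-\varphi(\mu_j t))/(\mu_j t)=-\sum_{m\ge 0}c_{m+1}\mu_j^{\,m}t^m$, the coefficient of $t^m$ on the right-hand side equals $-c_{m+1}\,h_m(\mu_1,\dots,\mu_n)$ and therefore involves only the single moment ${\mathsf E}[X^{m+1}]$, with nonzero multiplier because $h_m>0$. By contrast, the coefficient of $t^m$ in $\prod_j\varphi(\mu_j t)$ depends only on $c_1,\dots,c_m$, that is, on ${\mathsf E}[X],\dots,{\mathsf E}[X^m]$. Hence comparing coefficients of $t^m$ solves uniquely for ${\mathsf E}[X^{m+1}]$ in terms of the lower moments, while the case $m=0$ already forces ${\mathsf E}[X]=1$. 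Starting from $c_0=1$, this recursion determines the entire moment sequence uniquely. Because the unit exponential satisfies (\ref{eqn_44}) (there $f=\overline{F}$, so its transform solves the functional equation), its moments ${\mathsf E}[X^k]=k!$ solve the same recursion, and by uniqueness these must be the moments of $X$.

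Finally, Cram\'er's condition guarantees that the moment problem for $X$ is determinate, so a distribution is pinned down by its moments; since $X$ and ${\rm Exp}(1)$ share all moments, $X\sim{\rm Exp}(1)$. I expect the main obstacle to be the middle step: certifying that the recursion is genuinely triangular and solvable at every order, which rests on the nonvanishing of the Lagrange-weight power sums $\sum_j\ell_j\mu_j^{\,m}=h_m(\mu_1,\dots,\mu_n)>0$. Everything else, namely the integration-by-parts transform identity and the concluding moment-determinacy argument, is routine.
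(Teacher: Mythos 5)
Your argument is correct, and after the shared opening move (Laplace transform plus integration by parts, yielding $\prod_{j}\varphi(\mu_j t)=\sum_{j}\ell_j\bigl(1-\varphi(\mu_j t)\bigr)/(\mu_j t)$, exactly the paper's (\ref{wphi2}) before simplification) it diverges genuinely from the paper's proof. The paper divides through by the product, passes to $\psi=1/\varphi$, and uses the Leibniz rule together with a three-way partition of multi-indices and the sign structure of the weight sums from Lemma~2 to force $\psi(t)=1+t$ coefficient by coefficient. You instead keep $\varphi$ itself and exploit triangularity: the coefficient of $t^m$ on the right equals $-c_{m+1}h_m(\mu_1,\dots,\mu_n)$ with $h_m>0$, while on the left it involves only $c_1,\dots,c_m$, so the moments are determined recursively; since ${\rm Exp}(1)$ solves the same equation (your partial-fraction identity $\prod_j(1+\mu_j t)^{-1}=\sum_j\ell_j(1+\mu_j t)^{-1}$ verifies this directly), $X$ has the moments $k!$, and Cram\'er's condition gives determinacy. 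Your key identity $\sum_j\ell_j\mu_j^{\,m}=h_m(\mu_1,\dots,\mu_n)$ is in substance the same fact that underlies the paper's Lemma~2(iii) (there obtained from Proposition~5 of~\cite{SKK13}), and its positivity plays the role of the paper's coefficients $d_k>0$; your derivation via partial fractions is cleaner and self-contained. What your route buys: no division by $\varphi$, no multi-index bookkeeping, and a transparent reason the recursion closes at every order. What it costs: an extra appeal to moment determinacy, which could be avoided by noting that the recursion pins down the Taylor series of $\varphi$ at $0$ to be that of $(1+t)^{-1}$, whence $\varphi(t)=(1+t)^{-1}$ near $0$ and uniqueness of Laplace transforms of non-negative random variables concludes, as the paper implicitly does.
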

Setting $\mu_j=1/j$ for $j=1,2,\,\ldots,\,n$, we obtain the following corollary of Theorem~2.

\begin{corollary}
 Suppose that $X_1, X_2,\,\ldots,\,X_n$, $n\ge 2$, are independent copies of a non-negative random variable $X$ with distribution function $F$.  Assume also that $X$ satisfies Cram\'{e}r's condition: there is a number $t_0>0$ such that ${\mathsf E}[{\rm e}^{-tX}]<\infty$ for all $t\in (-t_0,t_0)$.  If~for fixed $n\ge 2$, 
\be \label{main_thmc}
X_1+\frac{1}{2}X_2+\ldots+\frac{1}{n}X_n \quad \mbox{has density}\quad \sum_{j=1}^n {n \choose j} (-1)^{j-1}j\overline{F}(jx) \qquad x>0,
\ee
then $X\sim {\rm Exp}(1)$ for some $\lambda>0$.
\end{corollary}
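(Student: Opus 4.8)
The plan is to derive Corollary 2 directly from Theorem 2 by specializing the mutually distinct rate factors to the choice $\mu_j = 1/j$ for $j=1,2,\,\ldots,\,n$. First I would check that this choice is admissible under the hypotheses of Theorem 2: the numbers $\mu_j = 1/j$ are positive and mutually different, and in fact $\mu_1 = 1 > \mu_2 = 1/2 > \cdots > \mu_n = 1/n > 0$, so the ordering convention adopted throughout is satisfied. The Cram\'er condition on $X$ is assumed in the statement of the corollary, matching the corresponding assumption in Theorem~2.

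Next I would compute the weights $\ell_j$ under this choice. Substituting $\mu_i = 1/i$ into (\ref{Lagrange_mu}) gives
\[
\ell_j = \prod_{i=1,\, i\ne j}^n \frac{\mu_j}{\mu_j - \mu_i} = \prod_{i=1,\, i\ne j}^n \frac{1/j}{1/j - 1/i} = \prod_{i=1,\, i\ne j}^n \frac{i}{i-j} = {n \choose j}(-1)^{j-1},
\]
where the final equality is exactly the one already recorded in the excerpt immediately before Corollary~1. It then follows that $\ell_j/\mu_j = j\,{n \choose j}(-1)^{j-1}$ and $\overline{F}(x/\mu_j) = \overline{F}(jx)$.

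Carrying these substitutions through (\ref{eqn_44}), the linear combination $\tilde{S}_n = \mu_1 X_1 + \cdots + \mu_n X_n$ becomes $X_1 + \frac{1}{2}X_2 + \cdots + \frac{1}{n}X_n$, and the associated density $\tilde{g}_n$ becomes precisely $\sum_{j=1}^n {n \choose j}(-1)^{j-1} j\,\overline{F}(jx)$ for $x>0$. Hence the hypothesis (\ref{main_thmc}) of the corollary is nothing but the specialization of (\ref{eqn_44}) to $\mu_j = 1/j$.

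Finally, since all the hypotheses of Theorem~2 are thereby verified, I would invoke Theorem~2 to conclude that $X \sim {\rm Exp}(1)$. I do not expect any genuine obstacle: the entire content of the argument is the weight evaluation, which is already available, so Corollary~2 is obtained as an immediate specialization of Theorem~2 rather than requiring an independent proof.
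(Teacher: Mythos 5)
Your proposal is correct and is exactly the paper's route: the paper obtains Corollary 2 by setting $\mu_j = 1/j$ in Theorem 2 and using the already-recorded evaluation $\ell_j = \prod_{i\ne j} i/(i-j) = \binom{n}{j}(-1)^{j-1}$, so that $\ell_j/\mu_j = j\binom{n}{j}(-1)^{j-1}$ and the hypothesis (\ref{eqn_44}) specializes to (\ref{main_thmc}). No further argument is needed.
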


We organize the rest of the paper as follows. Section~\ref{sec:2} contains preliminaries needed in the proofs of the theorems. The~proofs themselves are given in Section~\ref{sec:3}. We discuss the findings in the concluding Section~\ref{sec:4}.

\section{Auxiliaries}
\label{sec:2}

    We will need the Leibniz rule for differentiating {a} product of functions.
Denote by $v^{(k)}$ the $k$th derivative of $v(x)$ with $v^{(0)}(x):=v(x)$. 
Let us define a multi-index set
$\alphab=(\alpha_1,\alpha_2,\,\ldots,\,\alpha_{n}) $ as an $n$-tuple of non-negative integers, and~denote
$| \alphab |=\alpha_1+\alpha_2+\ldots+\alpha_n$. % and  $\alphab !=\alpha_1!\alpha_2!\cdots \alpha_n!$.
Leibniz considered the problem of determining the $k$th derivative of the product of $n$ smooth functions $v_1(t)v_2(t)\cdots v_n(t)$ and obtained the formula (e.g. \cite{TL03})
\be \label{Lm}
\frac{{\rm d}^k}{{\rm d}t^k}\left(\prod_{i=1}^n v_i(t)\right) = \sum_{|\alphab|=k} \left(\frac{k!}{\alpha_1!\alpha_2!\cdots \alpha_n!} \prod_{i=1}^n v_i^{(\alpha_i)}(t)\right). %v_1^{(\alpha_1)}(t)v_2^{(\alpha_2)}(t)\ldots v_n^{(\alpha_n)}(t),
\ee
Here the summation is taken over all multi-index sets $\alphab$ with $|\alphab|=k$. Formula (\ref{Lm}) can easily be proved by induction.
%Applying  (\ref{Lm}), we obtain the following lemma.

\begin{lemma} Assume that $v(t)=\sum_{i=0}^\infty a_it^i$ is a functional series, such that for some $\tilde{t}_0>0$, the~$k^{th}$ order derivative $v^{(k)}(t)$ exists for all $t\in (-\tilde{t}_0, \tilde{t}_0)$. Then for arbitrary positive real constants $\mu_1, \mu_2,\,\ldots,\,\mu_n$, we have
\be \label{newf}
\frac{{\rm d}^k}{{\rm d}t^k}\left(\prod_{i=1}^n v(\mu_it)\right)
\Big|_{t=0}= k!\sum_{|\alphab|=k} \prod_{i=1}^n \mu_i^{\alpha_i}a_{\alpha_i}.
\ee
\end{lemma}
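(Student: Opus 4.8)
The plan is to reduce the statement to the Leibniz rule (\ref{Lm}) by taking $v_i(t):=v(\mu_i t)$ and then evaluating the resulting expansion at $t=0$. First I would record two elementary facts. By the chain rule, repeated differentiation of $v_i(t)=v(\mu_i t)$ yields $v_i^{(\alpha_i)}(t)=\mu_i^{\alpha_i}\,v^{(\alpha_i)}(\mu_i t)$ for every non-negative integer $\alpha_i$. Next, since $v(t)=\sum_{i=0}^\infty a_i t^i$ is a power series that is differentiable up to order $k$ on $(-\tilde{t}_0,\tilde{t}_0)$, it has a positive radius of convergence, term-by-term differentiation is legitimate there, and evaluating at the origin recovers the coefficients: $v^{(m)}(0)=m!\,a_m$ for $0\le m\le k$. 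Combining the two facts gives $v_i^{(\alpha_i)}(0)=\mu_i^{\alpha_i}\,\alpha_i!\,a_{\alpha_i}$.

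With these in hand, I would apply (\ref{Lm}) to the product $\prod_{i=1}^n v_i(t)$ and set $t=0$:
\[
\frac{{\rm d}^k}{{\rm d}t^k}\left(\prod_{i=1}^n v(\mu_i t)\right)\Big|_{t=0}
=\sum_{|\alphab|=k}\frac{k!}{\alpha_1!\alpha_2!\cdots\alpha_n!}\prod_{i=1}^n v_i^{(\alpha_i)}(0)
=\sum_{|\alphab|=k}\frac{k!}{\alpha_1!\cdots\alpha_n!}\prod_{i=1}^n \mu_i^{\alpha_i}\,\alpha_i!\,a_{\alpha_i}.
\]
Because $\prod_{i=1}^n \alpha_i!=\alpha_1!\cdots\alpha_n!$ cancels the multinomial denominator, each summand collapses to $k!\prod_{i=1}^n\mu_i^{\alpha_i}a_{\alpha_i}$, and pulling out the common factor $k!$ yields exactly (\ref{newf}).

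The computation is short, and the only point requiring genuine care is the interchange of differentiation and summation used to obtain $v^{(m)}(0)=m!\,a_m$; this is where I expect the main (though mild) obstacle to lie. It is resolved by the standard fact that a power series is infinitely differentiable inside its interval of convergence and may be differentiated term by term there, so the hypothesis guaranteeing existence of $v^{(k)}$ on a neighborhood of the origin suffices for all derivatives of order at most $k$ that appear in the Leibniz expansion. No further estimates are needed, and the remaining factorial bookkeeping is purely routine.
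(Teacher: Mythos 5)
Your proposal is correct and follows exactly the route the paper intends: apply the Leibniz rule (\ref{Lm}) to $v_i(t):=v(\mu_i t)$, use $v_i^{(\alpha_i)}(0)=\mu_i^{\alpha_i}\alpha_i!\,a_{\alpha_i}$, and let the factorials cancel the multinomial coefficient. The paper's own proof is a one-line appeal to (\ref{Lm}); you have simply filled in the routine details it omits.
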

\begin{proof} Formula (\ref{newf}) is proved by applying Leibniz rule (\ref{Lm}) to $\prod_{i=1}^n v(\mu_it)$.
\end{proof}

In addition to (\ref{newf}), we will need some properties of Lagrange basis polynomials  $\ell_j$ collected~below.

\begin{lemma} (see~\cite{SKK13})
 Let $\lambda_1, \lambda_2,\,\ldots,\,\lambda_n$ be positive real numbers, such that 
 $\lambda_i\ne \lambda_j$ for $i\ne j$. Denote
 \[
\ell_j =  \prod_{i=1, i\ne j}^n \frac{\lambda_i}{\lambda_i-\lambda_j} \qquad j=1,2,\,\ldots,\,n.
\]
 Then, for~$n\ge 2$, we have the following:
 \begin{description}
 \item {\rm (i)} $\quad \D \sum_{j=1}^n \ell_j=1$.
 \item {\rm (ii)}   $\quad \D \sum_{j=1}^n \ell_j\lambda_j^k=0 \quad \mbox{for any}\ k, \ \  1\le k \le n-1$.
 \item {\rm (iii)} $\quad \D \sum_{j=1}^n  \frac{\ell_j}{\lambda_j^k}\ge \sum_{j=1}^n \frac{1}{\lambda_j^k} \quad \mbox{for any}\ k, \ \  1\le k \le n-1$,
where the equality holds if and only if $k=1$.
% \end{enumerate}
\end{description}
\end{lemma}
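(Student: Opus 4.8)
The plan is to exploit the identification, already noted in the Introduction, of $\ell_j$ with the value at $0$ of the Lagrange basis polynomial
\[
L_j(x)=\prod_{i=1,\,i\ne j}^n\frac{x-\lambda_i}{\lambda_j-\lambda_i}
\]
attached to the nodes $\lambda_1,\dots,\lambda_n$; indeed $L_j(0)=\prod_{i\ne j}\frac{-\lambda_i}{\lambda_j-\lambda_i}=\ell_j$. Since $\{L_j\}$ reproduces every polynomial $p$ of degree at most $n-1$ via $p(x)=\sum_{j=1}^n p(\lambda_j)L_j(x)$, evaluating at $x=0$ gives $p(0)=\sum_{j=1}^n p(\lambda_j)\ell_j$. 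Taking $p\equiv 1$ yields (i), and taking $p(x)=x^k$ with $1\le k\le n-1$ yields (ii), because then $p(0)=0$ while $p(\lambda_j)=\lambda_j^k$. These two parts are therefore immediate.

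For (iii) I would first establish the exact identity
\[
\sum_{j=1}^n\frac{\ell_j}{\lambda_j^k}=\sum_{|\betab|=k}\ \prod_{i=1}^n\lambda_i^{-\beta_i},
\]
the right-hand side being the complete homogeneous symmetric polynomial of degree $k$ in the reciprocals $1/\lambda_i$. The cleanest route is the partial-fraction expansion $\prod_{i=1}^n\frac{\lambda_i}{\lambda_i+t}=\sum_{j=1}^n\frac{\ell_j\lambda_j}{\lambda_j+t}$, whose correctness follows since the residue of the left side at $t=-\lambda_j$ equals exactly $\lambda_j\ell_j$. Expanding both sides as power series in $t$ about $0$ — the left side as $\prod_i(1+t/\lambda_i)^{-1}$ and the right side as $\sum_j\ell_j\sum_{m\ge 0}(-t/\lambda_j)^m$ — and matching the coefficient of $t^k$ gives the claimed identity (the factor $(-1)^k$ cancels on both sides). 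Equivalently, one may read it off from $\mathsf{E}[S_n^k]=k!\sum_j\ell_j/\lambda_j^k$, obtained by integrating $x^k$ against $g_n(x)=\sum_j\ell_j\lambda_j{\rm e}^{-\lambda_jx}$, combined with the independence expansion $\mathsf{E}[S_n^k]=k!\sum_{|\betab|=k}\prod_i\lambda_i^{-\beta_i}$ using $\mathsf{E}[Z_i^m]=m!/\lambda_i^m$.

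The inequality then reduces to comparing this symmetric polynomial with the power sum $\sum_{j}1/\lambda_j^k$. Since every $1/\lambda_j>0$, I split the sum over multi-indices into the ``diagonal'' terms $\betab=k\,e_i$, whose contributions add up to exactly $\sum_j 1/\lambda_j^k$, and the remaining multi-indices, which have at least two nonzero entries and hence contribute strictly positive terms. For $k=1$ only diagonal terms exist, so equality holds; for $2\le k\le n-1$, the hypothesis $n\ge 2$ guarantees at least one genuinely mixed term such as $\lambda_1^{-(k-1)}\lambda_2^{-1}>0$, forcing the strict inequality. This delivers (iii) together with its equality case.

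The only real obstacle is (iii): the crux is recognizing the combination $\sum_j\ell_j/\lambda_j^k$ as a complete homogeneous symmetric function of the $1/\lambda_i$, after which the inequality becomes transparent by a termwise comparison. Parts (i) and (ii) are routine once $\ell_j=L_j(0)$ is invoked.
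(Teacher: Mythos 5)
Your proof is correct, and it is in places more self-contained than the paper's. For (i) the paper simply integrates the density $g_n$ over $z>0$, and for (ii) it cites Corollary~1 of~\cite{SKK13}; your single Lagrange-interpolation argument ($p(0)=\sum_j p(\lambda_j)\ell_j$ for $\deg p\le n-1$, applied to $p\equiv 1$ and $p(x)=x^k$) proves both at once and is arguably cleaner. For (iii) the paper also rests on the identity $\sum_j \ell_j\lambda_j^{-k}=\sum_{|\alphab|=k}\prod_i\lambda_i^{-\alpha_i}$, but imports it as Proposition~5 of~\cite{SKK13}, whereas you actually derive it, either from the partial-fraction expansion of $\prod_i\lambda_i/(\lambda_i+t)$ (which is exactly the Laplace-transform identity the paper records as its equation for $S_n$) or from the moment expansion of $\mathsf{E}[S_n^k]$; your residue computation and power-series matching are correct (one should add the degree count ruling out a polynomial part in the partial fractions, but that is routine). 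From that identity onward your argument coincides with the paper's: split the multi-indices into the $n$ ``diagonal'' ones $k\,e_i$, which reproduce $\sum_j\lambda_j^{-k}$, and the rest, which are strictly positive and exist precisely when $k\ge 2$, giving the equality case $k=1$. The net effect is that your version eliminates the external dependencies on~\cite{SKK13} at the cost of a slightly longer write-up.
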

\begin{proof} Claim (i) {follows by integrating (\ref{ghexp}) over $z>0$}. Claim (ii) is proved in Corollary~1 of~\cite{SKK13}.  To~prove claim (iii) we involve  $\alphab$, the~multi-index set as in (\ref{Lm}). 
For $k\ge 1$, we have
$\alphab=\alphab' \cup \alphab''$, where
\nbeq
\alphab' & = & \{|\alphab|=k : \ \mbox{only one index in $\alphab$ equals $k$ and all others are zeros}\} \\
\alphab'' & = & \{|\alphab|=k : \ \mbox{no single index in $\alphab$ equals $k$}\}.
\neeq

According to Proposition~5 in~\cite{SKK13} we obtain, for~$n\ge 2$ and $k\ge 1$, the~following chain of relations:
\beq \label{skk}
\sum_{j=1}^n  \frac{\ell_j}{\lambda_j^k} & = & \sum_{|\alphab|=k} \frac{1}{\lambda_1^{\alpha_1}\lambda_2^{\alpha_2}\cdots \lambda_n^{\alpha_n}} \\
& = & \sum_{|\alphab'|} \frac{1}{\lambda_1^{\alpha_1}\lambda_2^{\alpha_2}\cdots \lambda_n^{\alpha_n}} +\sum_{|\alphab''|} \frac{1}{\lambda_1^{\alpha_1}\lambda_2^{\alpha_2}\cdots \lambda_n^{\alpha_n}} \nonumber \\
& = & \sum_{j=1}^n  \frac{1}{\lambda_j^k}+\sum_{|\alphab''|} \frac{1}{\lambda_1^{\alpha_1}\lambda_2^{\alpha_2}\cdots \lambda_n^{\alpha_n}} \nonumber \\
& \ge  & 
\sum_{j=1}^n  \frac{1}{\lambda_j^k}. \nonumber
\eeq
Clearly, the~equality in (\ref{skk}) holds if and only if $k=1$. The~proof is complete.
\end{proof}

The properties in Lemma~2 can be easily verified, as~an illustration, for~$n=2$, $k=1$, and~\mbox{$k=2$}.~Indeed, 
\nbeq
\sum_{j=1}^2 \ell_j & = & \frac{\lambda_2}{\lambda_2-\lambda_1}+\frac{\lambda_1}{\lambda_1-\lambda_2}=1, \qquad
\sum_{j=1}^2 \ell_j\lambda_j = \frac{\lambda_2\lambda_1}{\lambda_2-\lambda_1}+\frac{\lambda_1\lambda_2}{\lambda_1-\lambda_2}=0, \\
\sum_{j=1}^2 \frac{\ell_j}{\lambda_j}  & =  & \frac{\lambda_2}{(\lambda_2-\lambda_1)\lambda_1}+\frac{\lambda_1}{(\lambda_1-\lambda_2)\lambda_2}
     =  \frac{\lambda_2 + \lambda_1}{\lambda_1\lambda_2} 
     =  \sum_{i=1}^2\frac{1}{\lambda_i}, \\
\sum_{j=1}^2 \frac{\ell_j}{\lambda_j^2}  & = & \frac{\lambda_2}{(\lambda_2-\lambda_1)\lambda_1^2}+\frac{\lambda_1}{(\lambda_1-\lambda_2)\lambda_2^2}
     =  \frac{\lambda_2^2+\lambda_2\lambda_1+\lambda_1^2}{\lambda_1^2\lambda_2^2} 
     =  \sum_{i=1}^2\frac{1}{\lambda_i^2} + \frac{1}{\lambda_1\lambda_2}.
\neeq

\section{Proofs of the Characterization~Theorems} 
\label{sec:3}

In the proofs of both theorems we follow the four-step~scheme.
\begin{itemize}
\item Consider $X_1, X_2,\,\ldots,\,X_n$ for $n\ge 2$ to be independent copies of a non-negative random variable $X$ with density $f$. Suppose $\mu_1>\mu_2>\ldots >\mu_n$ are positive real numbers.
\item Assume the characterization property
\[
S_n=\mu_1X_1+\mu_2X_2+\cdots+\mu_nX_{n} \quad \mbox{has density}\quad g_n(x)=\sum_{j=1}^n \frac{\ell_j}{\mu_j}f\left(\frac{x}{\mu_j}\right),\quad x\ge 0,
\]
where $\ell_j$ is given in (\ref{Lagrange_mu}).
\item For the Laplace transform $\varphi(t)={\mathsf E}[{\rm e}^{-tX}]$, $t\ge 0$, obtain the equation
\be \label{eqn1}
\varphi(\mu_1 t)\varphi(\mu_2 t)\cdots \varphi (\mu_n t) = \sum_{j=1}^n \ell_j\varphi(\mu_j t).
\ee
\item Using Leibniz rule for differentiating product of functions and properties of Lagrange basis polynomials, show that  (\ref{eqn1}) has a unique solution given by
$\varphi(t)=(1+\lambda^{-1} t)^{-1}$ for some $\lambda>0$ and conclude that
\[
X_1, X_2,\,\ldots,\,X_n \quad \mbox{are}\quad {\rm Exp}(\lambda)\quad \mbox{random variables.}
\]
\end{itemize}

\begin{proof}[Proof of Theorem 1]
Recall that (see (\ref{wphi})) 
\[
\varphi(\mu_1 t)\varphi (\mu_2 t)\cdots \varphi (\mu_n t) 
      = \sum_{j=1}^n \ell_j\varphi (\mu_j t). 
\]

Dividing  both sides of this equation by $\varphi(\mu_1t)\varphi(\mu_2t)\cdots \varphi(\mu_nt)$, we obtain
\be\label{eqn24}
1 = \sum_{j=1}^n \left(\ell_j\prod_{i=1, i\ne j}^n \psi (\mu_i t)\right), 
\ee
where $\psi:=1/\varphi$. Consider the series
\be \label{notation4}
\psi(t)=\sum_{k=0}^\infty a_kt^k,
\ee
which, as~a consequence of Cram\'{e}r's condition for $\varphi$, is convergent in a proper 
neighborhood of $t=0$.
To 
prove the theorem, it is sufficient to show that
\be \label{claim}
\psi(t)=1+\lambda^{-1} t, \qquad \lambda>0.
\ee

We will prove that (\ref{eqn24}) implies (\ref{claim}) by showing that the
coefficients $\{a_k\}_{k=0}^\infty$ in (\ref{notation4}) satisfy 
$a_0=1$, $a_1=\lambda^{-1}>0$, and~$a_k=0$ for $k\ge 2$.
Notice first that
\be \label{a0}
a_0=\frac{1}{\varphi(0)}=1.
\ee
Denote 
\[
\Psi_j(t):= \prod_{i=1, i\ne j}^n  \psi(\mu_it) \quad \mbox{and} \quad 
H(t)  :=  \sum_{j=1}^n \ell_j \Psi_j(t) = \sum_{k=0}^\infty h_kt^k.
\]
By (\ref{eqn24})  we have $H(t)\equiv 1$ and therefore $h_0=1$ and $h_k=0$ for all $k\ge 1$. Equating   $h_k$'s  to the corresponding coefficients of the series in the right-hand side of (\ref{eqn24}), we will obtain equations for $\{a_k\}_{k=0}^\infty$. As~a first step, note that 
\be \label{coeff14}
h_k =  \frac{1}{k!}H^{(k)}(t)|_{t=0} 
     =  
 \frac{1}{k!}\sum_{j=1}^n \ell_j\Psi^{(k)}_j(t)\big|_{t=0},\qquad k\ge 1. 
 \ee
 
 Next, we apply Leibniz rule for differentiation.
To fix the notation, let us define a multi-index set
$\alphab_{-j}=(\alpha_1,\,\ldots,\,\alpha_{j-1},\alpha_{j+1},\,\ldots,\,\alpha_{n})$, $1\le j\le n$ as a set of $(n-1)$-tuples of non-negative integer numbers, with~$| \alphab_{-j} |=\alpha_1+\ldots+\alpha_{j-1}+\alpha_{j+1}+\ldots+\alpha_n$.
Applying Lemma~1 for fixed $k\ge 1$ and fixed $1\le j\le n$, we obtain
\be \label{hk}
 \Psi^{(k)}_j(t)\big|_{t=0} 
    = 
   k! \sum_{\{|\alphab_{-j}|=k\}} \prod_{i=1, i\ne j}^n \mu_i^{\alpha_i}a_{\alpha_i}. 
\ee

Introduce the set $\Lambda_{k,j}:=\{\alphab_{-j} : |\alphab_{-j} |=k\}$ and partition it into three disjoint subsets as follows:
\[
\Lambda_{k,j}=\Lambda'_{k,j}\cup \Lambda''_{k,j}\cup \Lambda'''_{k,j},
\]
where {for $k\ge 1$}
\nbeq
\Lambda'_{k,j}  & =  &\{|\alphab_{-j}|=k : \ \mbox{only one index in $\alphab_{-j}$ equals $k$, all others are zeros} \}\\
\Lambda''_{k,j} &  =  & \{|\alphab_{-j}|=k : \ {k\ge 2}\  \mbox{and exactly $k$ of the indices in $\alphab_{-j}$ equal $1$, all others are zeros}\} \\
\Lambda'''_{k,j} & =  &\{|\alphab_{-j}|=k : \ {k\ge 3}\ \mbox{{ and there is an index $\alpha_i$ with $2\le \alpha_i<k$}} \}.
\neeq

{For example, if~$n=5$, $k=3$, and~$j=5$, then 
$\Lambda'_{3,5}   =  \{ (3, 0, 0, 0), (0, 3, 0, 0), (0, 0, 3, 0), (0, 0, 0, 3)\}$, 
$\Lambda''_{3,5}   =   \{ (1, 1, 1, 0), (1, 1, 0, 1), (1, 0, 1, 1), (0, 1, 1, 1)\}$, and~
$\Lambda'''_{3,5}  =  \{ (1, 2, 0, 0), (1, 0, 2, 0),\,\ldots,\, (0, 0, 2, 1) \}$.}
Referring to (\ref{coeff14}) and (\ref{hk}), we have for $k\ge 1$
\beq \label{coeff74}
h_k & = & \sum_{j=1}^n \left( \ell_j \sum_{\Lambda_{k,j}} \prod_{i=1, i\ne j}^n \mu_i^{\alpha_i}a_{\alpha_i}\right) \\
   & =  &
   \sum_{j=1}^n \ell_j \left( \sum_{\Lambda'_{k,j}} (\cdot ) +  \sum_{\Lambda''_{k,j}} (\cdot )+ 
         \sum_{\Lambda'''_{k,j}} (\cdot )\right)\nonumber \\
         & =: &
         \sum_{j=1}^n \ell_j \left(S_{1,j} + S_{2,j} + S_{3,j}\right), \quad \mbox{say}. \nonumber
\eeq
For the term $S_{2,j}$ in the middle, since $a_0=1$, we have $S_{2,j}=0$ when $k=1$ and for any $k \ge 2$
\nbeq
S_{2,j} & = & \sum_{\Lambda''_{k,j}} \prod_{i=1, i\ne j}^n \mu_i^{\alpha_i}a_{\alpha_i} \\
& = & a_0^{n-1-k}a_1^k 
\sum \  ^{\! \! \prime} (\mu_{i_1}\mu_{i_2}\cdots \mu_{i_k}) \nonumber \\
& = & 
a_1^k \mu^{-1}_j
\sum \  ^{\! \! \prime} (\mu_j\mu_{i_1}\mu_{i_2}\cdots \mu_{i_k}) \nonumber 
\neeq 
where the summation in $\sum \  ^{\! \! \prime}$  is over all $k$-tuples (with $i_j$th component dropped) $i_{1},\,\ldots,\,i_{j-1},i_{j+1}\ldots,\,i_{k}$,  such that $i_m\in \{1, 2,\,\ldots,\,n\}$ and $i_1<i_2<\ldots <i_k$.
Using that $\sum_{j=1}^n \ell_j \mu^{-1}_j=0$ by Lemma 2(ii) with $\lambda_i=\mu^{-1}_i$, we obtain for any $k \ge 2$
\be   \label{S2}
\sum_{j=1}^n \ell_j S_{2,j}  =  
a_1^k\left(\sum_{j=1}^n \ell_j\mu^{-1}_j \right) \sum \  ^{\! \! \prime \prime} (\mu_{i_1}\mu_{i_2}\cdots \mu_{i_k})
 =  0.
\ee
Here the summation in $ \sum \  ^{\! \! \prime\prime}$  is over all $k$-tuples $i_{1},i_{2},\,\ldots,\,i_k$, such that $i_m\in \{1, 2,\,\ldots,\,n\}$ and \mbox{$i_{1}< i_2<\ldots < i_k$}. 
For the first term $S_{1,j}$ in the last expression of (\ref{coeff74}), we have for any $k\ge 1$
\nbeq 
S_{1,j} & =  &
 \sum_{\Lambda'_{k,j}} \prod_{i=1, i\ne j}^n \mu_i^{\alpha_i}a_{\alpha_i}
 =   
a_0^{n-2} a_k  \sum_{i=1, i\ne j}^n \mu_i^k \nonumber \\
&  =   &
a_k \left(\sum_{i=1}^n \mu_i^k-\mu_j^k\right). \nonumber 
\neeq
Furthermore, since $\sum_{j=1}^n \ell_j=1$ by Lemma 2(i) with $\lambda_i=\mu^{-1}_i$, we have  for any $k\ge 1$
\beq \label{ck}
\sum_{j=1}^n \ell_j S_{1,j}
& = & a_k\sum_{j=1}^n \ell_j\left(\sum_{i=1}^n \mu_i^k-\mu_j^k\right)  \\
    & = & a_k\sum_{i=1}^n \mu^k_i\sum_{j=1}^n \ell_j-
    a_k\sum_{j=1}^n \ell_j\mu^k_j \nonumber \\
    & = & a_k\left(\sum_{i=1}^n \mu^k_i-\sum_{j=1}^n \ell_j\mu^k_j\right) \nonumber \\
    & =: & a_kc_k. \nonumber
\eeq

Lemma 2(iii) with $\lambda_i=\mu^{-1}_i$ implies that $c_1=0$ and $c_k< 0$ for any $k \ge 2$. It follows from (\ref{coeff74})--(\ref{ck})~that 
\be \label{coeff94}
h_k= c_ka_k
 +\sum_{j=1}^n \ell_j S_{3,j},
\ee
where $c_1=0$ and $c_k< 0$ for $k\ge 2$.

Let $k=1$. Since $h_1=0$ and the sets $\Lambda''_1$ and $\Lambda'''_2$ are empty, we obtain
$c_1 a_1=0$,
where $c_1=0$. 
Hence, there are no restrictions on the coefficient $a_1$, other than $a_1>0$, {since $X$ has positive mean}. Therefore, there is a number $\lambda^{-1}>0$ such that
\be \label{a1}
a_1=\lambda^{-1}>0.
\ee
Let $k=2$. Since the set $\Lambda'''_2$ is empty, Equation~(\ref{coeff94}) yields
$
h_2=c_2a_2 =0,
$
where recall that $c_2<0$. Thus,~ 
%\be \label{a2}
$a_2=0$. 
Next, applying (\ref{coeff94}) and taking into account that $h_k=0$ for $k\ge 2$,  we will show by induction that $a_k=0$ for any $k\ge 2$.
Assuming $a_k=0$ for $k=2,3,\,\ldots,\,r$, we will show that $a_{r+1}=0$. Indeed, by~(\ref{coeff94}) we have
\[
h_{r+1} =  c_{r+1}a_{r+1}+ \sum_{j=1}^n \left( \ell_j\sum_{\Lambda'''_{r+1,j}} \prod_{i=1, i\ne j}^n \mu_i^{\alpha_i}a_{\alpha_i}\right)
       = 
       c_{r+1}a_{r+1},
\]
because at least one index $\alpha_i$, satisfies $2\le \alpha_i\le r$ and hence $a_{\alpha_i}=0$, by~assumption. 
  Therefore,~\mbox{$h_{r+1}=c_{r+1}a_{r+1}=0$} and, since $c_{r+1}< 0$, we have $a_{r+1}=0$, which completes the induction.~Hence,
\be \label{ak}
a_k=0 \quad \mbox{for any} \quad k\ge 2.
\ee

The Equations~(\ref{a0}) and (\ref{a1})--(\ref{ak}) imply (\ref{claim}), which completes the proof of the theorem.
\end{proof}

\vspace{0.3cm}\begin{proof}[Proof of Theorem 2]
Taking into account (\ref{eqn_44}), similarly to (\ref{wphi}) and using integration-by-parts, we obtain
\nbeq 
\varphi(\mu_1 t)\varphi(\mu_2 t)\cdots \varphi(\mu_n t) & = & 
 \int_0^\infty {\rm e}^{-tx}g_n(x)\, dx 
     =  \int_0^\infty {\rm e}^{-tx}\sum_{j=1}^n \frac{\ell_j}{\mu_j}\overline{F}\left(\frac{x}{\mu_j}\right)\, dx  \nonumber \\
    & = & \sum_{j=1}^n \ell_j\int_0^\infty {\rm e}^{-tx}\frac{1}{\mu_j}\overline{F}\left(\frac{x}{\mu_j}\right)\, dx  \nonumber \\
     & = & \frac{1}{t}\sum_{j=1}^n \frac{\ell_j}{\mu_j} \left(1-\varphi(\mu_j t)\right). \nonumber
    \neeq
    
Using the fact that $\sum_{j=1}^n \ell_j/ {\mu_j}=0$ (see Lemma 2(ii)), this simplifies to
\be \label{wphi2}
\varphi(\mu_1 t)\varphi(\mu_2 t)\cdots \varphi(\mu_n t)
=- \frac{1}{t}\sum_{j=1}^n \frac{\ell_j}{\mu_j}\varphi(\mu_j t).
\ee

Dividing  both sides of (\ref{wphi2}) by $-\varphi(\mu_1 t)\varphi(\mu_2 t)\cdots \varphi(\mu_n t)/t$, for~$t>0$, we obtain
\be\label{eqn242}
-t = \sum_{j=1}^n \frac{\ell_j}{\mu_j}\prod_{i=1, i\ne j}^n \psi (\mu_i t), 
\ee
where, {as before}, $\psi =1/\varphi$. Consider the series
%\be \label{notation4} 
$\psi(t)=\sum_{k=0}^\infty a_kt^k$,
which is convergent by assumption.
To 
prove the theorem, it is sufficient to show that
%\be \label{claim}
$\psi(t)=1+t$, $t\ge 0$, or, equivalently, that~the
coefficients $\{a_k\}_{k=0}^\infty$ of the above series satisfy 
$a_0=1$, $a_1=1$, and~$a_k=0$ for $k\ge 2$. Clearly,~ 
%\be \label{a0}
\mbox{$a_0=1/\varphi(0)=1$}.
Recall that
\[
\Psi_j(t):= \prod_{i=1, i\ne j}^n  \psi (\mu_i t)\quad \mbox{and denote} \quad 
-Q(t)  :=  \sum_{j=1}^n \frac{\ell_j}{ \mu_j}\Psi_j(t) = - \sum_{k=0}^\infty q_kt^k.
\]
By (\ref{eqn242})  we have $Q(t)\equiv t$ and therefore $q_1=1$ and $q_k=0$ for all $k\ne 1$. We will express $q_k$ in terms of $a_j${'}s. 
Proceeding as in the proof of Theorem~1, applying Leibniz rule for differentiating {a} product of functions, and~using the same notation, we obtain for $k\ge 1$ that
\be \label{coeff742}
-q_k = \sum_{j=1}^n \frac{\ell_j}{ \mu_j}\left(S_{1,j} + S_{2,j} + S_{3,j}\right). \nonumber
\ee

As with (\ref{S2}), applying Lemma 2(ii), we obtain
\be   \label{S22}
\sum_{j=1}^n \frac{\ell_j}{ \mu_j} S_{2,j}  =  
a_1^k\left(\sum_{j=1}^n \frac{\ell_j}{\mu_j^2}  \right)\sum \  ^{\! \! \prime \prime} (\mu_{i_1}\mu_{i_2}\cdots \mu_{i_k})
 =  0, 
\ee
where the summation in $ \sum \  ^{\! \! \prime\prime}$  is over all $k$-tuples $i_{1},\,\ldots,\,i_k$, such that $i_m\in \{1,\,\ldots,\,n\}$ and $i_{1}<\ldots < i_k$. 
Furthermore, since $\sum_{j=1}^n \ell_j=1$ and $\sum_{j=1}^n \ell_j/ \mu^2_j=0$ by Lemma 2, we have  for any $k\ge 1$
\beq \label{ck2}
\sum_{j=1}^n \frac{\ell_j}{\mu_j}S_{1,j}
& = & a_k\sum_{j=1}^n \frac{\ell_j}{\mu_j}\left(\sum_{i=1}^n \mu_i^k-\mu_j^k\right)  \\
    & = & a_k\sum_{i=1}^n \mu_i^k\sum_{j=1}^n \frac{\ell_j}{\mu_j}-
    a_k\sum_{j=1}^n \ell_j\mu_j^{k-1} \nonumber \\
    & = & -a_k\sum_{j=1}^n \ell_j\mu_j^{k-1} \nonumber \\
    & =: & -a_kd_k. \nonumber
\eeq
It follows from (\ref{coeff742}) and (\ref{ck2}) that for $k\ge 1$,
\be \label{coeffd}
-q_k= -a_kd_k + \sum_{j=1}^n \frac{\ell_j}{\mu_j} S_{3,j}.
\ee

Let $k=1$.
Since $q_1=1$ and the set $\Lambda'''_1$ is empty, we obtain
$a_1d_1=1$, where $d_1=1$ by Lemma~2(iii). Therefore, $a_1=1$.
Let $k=2$. Since $\Lambda'''_2$ is empty, Equation~(\ref{coeffd}) yields
$
q_2=d_2a_2 =0,
$
where $d_2>0$ by Lemma~2(iii). Thus, $a_2=0$.
Assuming $a_k=0$ for $2\le k\le r$, we will show that $a_{r+1}=0$. Indeed, %by (\ref{coeff94})
\[
q_{r+1} =   d_{r+1}a_{r+1}+ \sum_{j=1}^n  \frac{\ell_j}{\mu_j} S_{3,j}
       = 
       d_{r+1}a_{r+1},
\]
because at least one index $\alpha_i$, satisfies $2\le \alpha_i\le r$, in~which case $a_{\alpha_i}=0$, by~assumption. 
Therefore,~\mbox{$q_{r+1}=d_{r+1}a_{r+1}=0$} and, since $d_{r+1}< 0$, we have $a_{r+1}=0$, which completes the induction proof. Hence,
$
a_k=0$ for any $k\ge 2$.
Since $a_0=a_1=1$ and $a_k=0$ for $k\ge 2$, we obtain $\psi(t)=1+t$, which clearly completes the proof of the theorem.
 \end{proof}

\section{Concluding~Remarks}
\label{sec:4}

Arnold and Villase\~{n}or~\cite{AV13} proved 
that if $X_1$ and $X_2$ are two independent and non-negative random variables with common density $f$ and ${\mathsf E}[X_1]<\infty$, then
\[
X_1+\frac{1}{2}X_2 \qquad \mbox{has density} \qquad 2f(x)-2f(2x),\quad x>0,
\]
if and only if $X_1\sim {\rm Exp}(\lambda)$ for some $\lambda>0$. Motivated by this result, we extended it in two directions considering: (i) arbitrary number $n\ge 2$ of independent identically distributed non-negative random variables and (ii) linear combination of independent variables with arbitrary positive and distinct coefficients $\mu_1, \mu_2,\,\ldots,\,\mu_n$. Namely,
our main result is that 
\[
S_n=\mu_1X_1+\mu_2X_2+\ldots+\mu_nX_{n} \quad \mbox{has density}\quad g_n(x)= 
\sum_{j=1}^n \frac{\ell_j}{\mu_j}f\left(\frac{x}{\mu_j}\right)\quad x\ge 0,
\]
where 
$\ell_j =  \prod_{i=1, i\ne j}^n \mu_j(\mu_j-\mu_i)^{-1}$,
if and only if $X_i\sim {\rm Exp} (\lambda)$ for some $\lambda>0$.
%The proof of this extension makes use of properties of Lagrange basis polynomials and the %general Leibniz rule of differentiating product of functions.

In this paper, we dealt with the situation where the rate parameters $\lambda_i$ are all distinct from each other. The~other extreme case of equal $\lambda_i$'s is trivial. The~obtained characterization seems of interest on its own, but~it can also serve as a basis for further investigations of intermediate cases of mixed type with some ties and at least two distinct parameters (see~\cite{SKK16}).  Of~certain interest is also the case where  not all weights {$\mu_i$}'s are positive (see~\cite{LL19}).

\phantom
{It is known that the density of the sum of independent exponentials with distinct rate parameters is linear combination of the single variables densities.
Open problem (characterization of exponential) based on Exercise 13 in Feller.}

%%%%%%%%%%%%%%%%%%%%%%%%%%%%%%%%%%%%%%%%%%
%\vspace{-5cm}
{\bf Funding} This research was funded in part by the National Scientific Foundation of Bulgaria at the Ministry of Education and Science, grant No KP-6-H22/3.

%%%%%%%%%%%%%%%%%%%%%%%%%%%%%%%%%%%%%%%%%%
{\bf Acknowledgments} I thank Jordan Stoyanov for his mentorship, useful suggestions and critical comments on previous versions of the paper. The~author acknowledges the valuable suggestions from the anonymous~reviewers.

%%%%%%%%%%%%%%%%%%%%%%%%%%%%%%%%%%%%%%%%%%

% Citations and References in Supplementary files are permitted provided that they also appear in the reference list here. 

%=====================================
% References, variant A: internal bibliography
%=====================================


\begin{thebibliography}{999}

\bibitem{LL19}
{Li, K.-H.; Li, C.T.}
Linear combination of independent exponential random variables.
\textit{Methodol. Comput. Appl.~Probab.} \textbf{2019}, \emph{21},~253--277.

\bibitem{SKK16}
{Smaili, K.H.; Kadri, T.H.; Kadry, S.}
Finding the PDF of the hypoexponential random variable using the Kad matrix similar to the general Vandermonde matrix.
\textit{Commun. Statist. Theory Methods} \textbf{2016}, \emph{45},~1542--1549.

\bibitem{R19}
{Ross, S.M.}
\textit{Introduction to Probability Models}, 12th ed.;
Academic Press: New York, NY, USA, 2019.



\bibitem{SB99}
{Sen, A.; Balakrishnan, N.}
Convolution of geometrics and a reliability problem.
\textit{Statist. Probab. Lett.} \textbf{1999}, \emph{43},~421--426.

\bibitem{A17}
{Ahsanullah, M.}
\textit{Characterizations of Univariate Continuous Distributions}; Atlantic Press: \mbox{Amsterdam, The Netherlands}, 2017.

\bibitem{AH95}
{Arnold, B.C.; Huang, J.S.}
{Characterizations}.
 In \emph{The Exponential Distribution: Theory, Methods and Applications};
 Balakrishnan, N., Basu, A.P.,   Eds.;  Gordon and Breach: Amsterdam, The Netherlands, 1995; pp. 79--95.

\bibitem{AV86}
{Azlarov, T.; Volodin, N.A.}
\textit{Characterization Problems Associated with the Exponential Distribution};
 Springer: Berlin, Germany, 1986.
 
\bibitem{N06}
{Nagaraja, H.N.}
{Characterizations of probability distributions}.
 In \emph{Springer Handbook of Engineering Statistics};
 Pham, H.,   Ed.; Springer: Berlin, Germany, 2006; pp. 395--402.
 
 \bibitem{AV13}
{Arnold, B.C.; Villase\~{n}or, J.A.}
Exponential characterizations
motivated by the structure of order statistics in sample of size two.
\textit{Statist. Probab. Lett.} \textbf{2013}, \emph{83},~596--601.

\bibitem{Y20}
{Yanev, G.P.}
On Arnold-Villase\~{n}or conjectures for characterizing exponential distribution based on sample of size three.
\textit{REVSTAT} \textbf{2020}, \emph{18},~177--188.

\bibitem{F71} 
{Feller, W.}
\textit{An Introduction to Probability Theory and Its Applications}, 2nd ed.;
Wiley: New York, NY, USA, 1971; Volume II.
 
\bibitem{TL03}
{Thaheem, A.B.; Laradji, A.}
Classroom note: A generalization of Leibniz rule for higher derivatives.
\textit{Intern.~J. Math. Educ. Sci. Technol.} \textbf{2003}, \emph{34},~739--742.

\bibitem{SKK13}
{Smaili, K.H.; Kadri, T.H.; Kadry, S.}
Hypoexpponential distribution with different parameters.
\textit{Appl. Math.} \textbf{2013}, \emph{4},~624--631.



\end{thebibliography}
\end{document}